\numberwithin{equation}{section}
\DeclareMathOperator{\gw}{\mathsf{g}_{\pm}}
\DeclareMathOperator{\g}{\mathsf{g}}
\DeclareMathOperator{\ord}{ord}
\DeclareMathOperator{\supp}{supp}
\newtheorem{theorem}{Theorem}[section]
\newtheorem{proposition}[theorem]{Proposition}
\newtheorem{lemma}[theorem]{Lemma}
\newtheorem{corollary}[theorem]{Corollary}
\theoremstyle{definition}
\newtheorem{definition}[theorem]{Definition}
\begin{document}

\title[Harborth constant and its weighted analogue]{Some exact values of the Harborth constant and its plus-minus weighted analogue}

\author{Luz E. Marchan \and  Oscar Ordaz \and Dennys Ramos \and Wolfgang A. Schmid}

\address{(L.E.M \& D.R.) Departamento de Matem\'aticas, Decanato de Ciencias y Tecnolog\'{i}as, Universidad Centroccidental Lisandro Alvarado, Barquisimeto, Venezuela}
\address{(O.O.) Escuela de Matem\' aticas y Laboratorio MoST, Centro ISYS, Facultad de Ciencias,
Universidad Central de Venezuela, Ap. 47567, Caracas 1041--A, Venezuela}
\address{(W.A.S.) Universit\'e Paris 13, Sorbonne Paris Cit\'e, LAGA, CNRS, UMR 7539, Universit\'e Paris 8, F-93430, Villetaneuse, France}

\email{luzelimarchan@gmail.com}
\email{oscarordaz55@gmail.com}
\email{ramosdennys@ucla.edu.ve}
\email{schmid@math.univ-paris13.fr}

\thanks{The research of O. Ordaz is supported by the Postgrado de la Facultad de Ciencias de la U.C.V., the CDCH project number 03-8018-2011-1, and the Banco Central de Venezuela; the one of W.A. Schmid by the PHC Amadeus 2012 project number 27155TH and the ANR project Caesar, project number ANR-12-BS01-0011.}

\subjclass[2010]{11B30, 11B75, 20K01}

\keywords{finite abelian group, weighted subsum, zero-sum problem}

\begin{abstract}
The Harborth constant of a finite abelian group is the smallest integer $\ell$ such that each subset of $G$ of cardinality $\ell$ has a subset of cardinality equal to the exponent of the group whose elements sum to the neutral element of the group. The plus-minus weighted analogue of this constant is defined in the same way except that instead of considering the sum of all elements of the subset one can choose to add either the element or its inverse.
We determine these constants for certain groups, mainly groups that are the direct sum of a cyclic group and a group of order $2$. Moreover, we contrast these results with existing results and conjectures on these problems.
\end{abstract}

\maketitle

\section{Introduction}

We investigate a certain zero-sum constant of finite abelian groups, introduced by Harborth \cite{harborth}, and one of its weighted analogues.
For a finite abelian group $G$, denoted additively, a zero-sum constant of $G$ can be defined as the smallest integer $\ell$ such that each set (or sequence, resp.) of elements of $G$ of cardinality (or length, resp.) $\ell$ has a subset (or subsequence, resp.) whose elements sum to $0$, the neutral element of the group, and that possibly fulfills some additional condition (typically on its size). We refer to the survey article \cite{gaogersurvey} for an overview.

Motivated by a problem on lattice points Harborth considered the constants that arise, for sequences and for sets, when the additional condition on the substructure is that its size is equal to the exponent of the group. For cyclic groups and in the case of sequences this problem had been considered by Erd\H{o}s, Ginzburg, and Ziv \cite{egz} and the resulting constant is thus sometimes called the Erd\H{o}s--Ginzburg-Ziv constant of $G$; see, e.g., \cite{chintamietal} for a recent contribution to this problem.

In the present paper we focus on the constant introduced by Harborth for sets, which we thus call the Harborth constant of $G$; we preserve the classical notation $\g(G)$. The constant $\g(G)$, that is the smallest $\ell$ such that each subset of $G$ of cardinality $\ell$ has a subset of cardinality equal to the exponent of the group whose terms sums to $0$, is only known for very few types of groups. Even in the case of elementary $3$-groups where the problem is particularly popular as it is equivalent to several other well-investigated problems (cap-sets and sets without $3$-term arithmetic progressions) the precise value is only known for rank up to $6$ (see \cite{edeletal} for a detailed overview and \cite{potechin} for the more recent result for rank $6$). Kemnitz \cite{kemnitz} established general bounds for homocyclic groups, from which the exact value for cyclic group follows, namely $\g(C_n)$ equals $n$ or $n+1$, according to $n$ odd or even; note that the constant being $n+1$ in case of even $n$ means that there is no set with the desired property at all, yet for $\ell> n$ the statement is vacuously true. More generally, it is known (see \cite[Lemma 10.1]{gaogersurvey}) that $\g (G) = |G| + 1$ if and only if $G$ is an elementary $2$-group or a cyclic group of even order.

Moreover, Kemnitz showed $\g(C_p^2)=2p-1$ for $p\in \{3,5,7\}$. More recently Gao and Thangadurai \cite{gaothanga} showed $\g(C_p^2)=2p-1$ for prime $p \ge 67$ (this was later refined to $p\ge 47$, see \cite{ggs}) and $\g(C_4^2)= 9$. They then conjectured that $\g(C_n^2)$ equals $2n-1$ or $2n+1$, according to $n$ odd or even, which are the lower bounds obtained by Kemnitz.

Thus, one notices a direct dependence on the parity of the exponent $n$ both for $C_n$ and $C_n^2$, in the latter case at least conjecturally; also the bounds of Kemnitz depend on the parity of the exponent.
One of our results is the exact value of $\g(C_2 \oplus C_{2n})$ for all $n$.

\begin{theorem}
\label{thm_c22n_c}
Let $n\in \mathbb{N}$.
We have
\[
\g(C_2 \oplus C_{2n})=\begin{cases} 2n + 3  &  \text{for $n$ odd }   \\ 2 n + 2 &  \text{for $n$ even} \end{cases}.
\]
\end{theorem}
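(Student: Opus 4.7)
The plan is to exploit the complementation identity $\sum_{g \in G} g = 0$, which holds because $G = C_2 \oplus C_{2n}$ contains four elements of order dividing $2$. Writing $\sigma(A) := \sum_{a \in A} a$ for $A \subseteq G$, this identity implies that a $2n$-subset of $S$ has zero sum if and only if its complement in $S$ has sum $\sigma(S)$. Thus the upper bounds reduce to the following: for $n$ even and $|S| = 2n + 2$, find a $2$-subset of $S$ summing to $\sigma(S)$; for $n$ odd and $|S| = 2n + 3$, find a $3$-subset of $S$ summing to $\sigma(S)$.

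The central tool is a pair-finding lemma: if $A \subseteq G$ has $|A| > 2n$ and $t \in G \setminus 2G$, then $A$ contains distinct $y, z$ with $y + z = t$. Indeed, $y \mapsto t - y$ is fixed-point-free on $G$ (as $t \notin 2G$) and partitions $G$ into $2n$ pairs, so $|A| > 2n$ forces $A$ to contain some pair entirely. For the $n$ odd upper bound I would apply this element-by-element: for each $x \in S$, set $t_x = \sigma(S) - x$. If $t_x \notin 2G$ for some $x$, the lemma on $S \setminus \{x\}$ (of size $2n + 2$) yields $\{y, z\}$ with $y + z = t_x$ and hence the desired triple $\{x, y, z\}$. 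Otherwise $t_x \in 2G$ for every $x \in S$, which means $S \subseteq \sigma(S) + 2G$, a coset of size $|2G| = n$; this contradicts $|S| = 2n + 3 > n$.

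For the $n$ even upper bound I apply the lemma with $t = \sigma(S)$; if $t \notin 2G$ we are done, so assume $t \in 2G$. Failure of the pair conclusion then forces $S = F_t \cup T$, where $F_t = \{y \in G : 2y = t\}$ is a coset of the $2$-torsion (four elements summing to $2t$) and $T$ is a half-system of the $2n - 2$ free orbits of $y \mapsto t - y$ on $G \setminus F_t$. Since $t \in 2G$ has first coordinate $0$, each such orbit has the form $\{(a, b), (a, t_2 - b)\}$ with constant first coordinate, and with $n - 1$ pairs for each $a \in \{0, 1\}$ the first coordinate of $\sigma(T)$ equals $(n - 1) \bmod 2$. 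For $n$ even this equals $1$, whereas $\sigma(T) = \sigma(S) - 2t = -t$ has first coordinate $0$, a contradiction. This parity gap is precisely what produces the different answers for $n$ even and $n$ odd.

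The lower bounds come from explicit constructions. For $n$ even, $S = (\{0\} \oplus C_{2n}) \cup \{(1, 0)\}$ has $|S| = 2n + 1$ with $\sigma(S) = (1, n) \notin S$, so no singleton of $S$ equals $\sigma(S)$ and hence no $2n$-subset sums to zero. For $n$ odd, take $S$ to be the $2$-torsion subgroup $\{(0, 0), (1, 0), (0, n), (1, n)\}$ together with $\{(a, b) : a \in \{0, 1\},\ 1 \le b \le n - 1\}$, of total size $2n + 2$; using that $n - 1$ is even and $n(n - 1) \equiv 0 \pmod{2n}$, one checks $\sigma(S) = 0$, and the construction visibly contains no pair $\{x, -x\}$ with $x \ne -x$. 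The main obstacle will be the structural step in the $n$ even upper bound — identifying the forced configuration $S = F_t \cup T$ and carrying out the first-coordinate parity count — since the $n$ odd upper bound collapses cleanly to a coset pigeonhole once the pair lemma is in place.
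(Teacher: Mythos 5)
Your proof is correct, but it follows a genuinely different route from the paper's. The paper obtains the upper bounds by projecting onto a $C_2$ (for even $n$) or a $C_2\oplus C_2$ complement (for odd $n$, after writing the group as $C_2\oplus C_2\oplus C_n$) and invoking a sumset/restricted-sumset covering lemma on the cyclic quotient; its odd lower bound is an explicit four-block construction $B\,(f_1+f_2+B)(f_1-B)(f_2-B)$, and the even lower bound comes from a general splitting inequality $\g(H\oplus K)\ge \mathsf{O}(H)+\g(K)-1$. You instead work directly in $G$ throughout: you reduce everything by complementation to finding a $2$- or $3$-subset of $S$ with sum $\sigma(S)$, and then use the involution $y\mapsto \sigma(S)-y$ (or $y \mapsto t_x - y$). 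Your odd upper bound is particularly clean --- either some target $t_x$ lies outside $2G$ and a pigeonhole on the $2n$ free orbits finishes, or $S$ sits in a coset of $2G$ of size $n<2n+3$ --- and notably it never uses that $n$ is odd, so it gives $\g\le 2n+3$ uniformly, bypassing the paper's $C_2\oplus C_2\oplus C_n$ decomposition and its Proposition \ref{prop_nonzero} entirely. Your even upper bound replaces the paper's appeal to Lemma \ref{lem_fullgroup}(2) with an explicit forced-structure argument ($S=F_t\cup T$ with $T$ a transversal of the $2n-2$ free orbits) followed by a first-coordinate parity count, which is essentially the same fixed-point bookkeeping the paper hides inside its lemma, made concrete. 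What the paper's route buys is reusability: Proposition \ref{prop_nonzero} is shared with the proof of the weighted Theorem \ref{thm_c22n_w}, whereas your argument is self-contained but specific to the unweighted problem. All of your verifications check out (in particular $|2G|=n$, $\sigma(F_t)=2t$, the $n-1$ free orbits per first coordinate, and both constructions having the required sums and no short complementary zero-sum), though note that the identity $\sum_{g\in G}g=0$ you open with is not actually needed: complementation inside $S$ only uses $\sigma(S\setminus T)=\sigma(S)-\sigma(T)$.
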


Again, one observes a direct dependence on the parity of $n$. However, it should be noted that it is of a somewhat different flavor as the exponent of the group, $2n$, is even, regardless.

In addition to these investigations, we also investigate the plus-minus weighted analogue of the Harborth constant.
There are several ways to introduce weights in zero-sum problems. The one we consider here was introduced by Adhikari et al. (see \cite{adetal,adhi0}). Instead of requiring the existence of a zero-sum subsequence or subset one requires only the existence of a plus-minus weighted zero-subsum, that is for each element of the subsequence or subset one is free to choose to add either the element or its inverse (see Section \ref{prel} for a more formal and general definition).
For a recent investigation of the plus-minus weighted analogue of the Erd\H{o}s--Ginzburg--Ziv constant see \cite{adhikari-david}.

We determine the plus-minus weighted Harborth constant, denoted $\gw(G)$, for cyclic groups and groups of the form $C_{2} \oplus C_{2n}$. More specifically we obtain the following results.

\begin{theorem}
\label{thm_cyclic_pm}
Let $n \in \mathbb{N}$.
Then
\[
\gw(C_n) = \begin{cases} n + 1  &  \text{for } n \equiv 2 \pmod{4}  \\ n &  \text{otherwise}  \end{cases}.
\]
\end{theorem}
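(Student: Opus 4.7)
My plan is to exploit the fact that the exponent of $C_n$ equals $|C_n|$, so the only subset of $C_n$ of cardinality equal to the exponent $n$ is $C_n$ itself. I would begin with two trivial bounds. No subset of cardinality $n-1$ contains any subset of cardinality $n$, so the defining condition of $\gw$ fails vacuously at $\ell = n-1$, yielding $\gw(C_n) \geq n$; conversely, there is no subset of $C_n$ of cardinality $n+1$, so the condition holds vacuously at $\ell = n+1$, yielding $\gw(C_n) \leq n+1$. Everything therefore reduces to a single question: does some choice of signs $\epsilon_g \in \{\pm 1\}$ make $\sum_{g \in C_n} \epsilon_g g = 0$?

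The key reformulation is to write $\sum_{g \in C_n} \epsilon_g g = \sum_{g \in C_n} g - 2 \sum_{g \in S^-} g$, where $S^- = \{g : \epsilon_g = -1\}$. As $S^-$ ranges over all subsets of $C_n$, the subset sum $\sum_{g \in S^-} g$ takes every value in $C_n$ (since every singleton is a subset). Consequently, the set of achievable signed sums is the coset $\frac{n(n-1)}{2} + 2 C_n$, where $2 C_n$ denotes the image of the doubling map. The question becomes: when does this coset contain $0$?

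The main and only substantive step is a parity check. If $n$ is odd, then $2 C_n = C_n$ and $0$ is trivially in the coset. If $n$ is even, write $n = 2m$; then $\frac{n(n-1)}{2} \equiv m \pmod{n}$, while $2 C_n$ is the index-$2$ subgroup of even residues in $C_n$. Hence $0$ lies in the coset if and only if $m$ is even, if and only if $4 \mid n$. Combining with the bounds above, one obtains $\gw(C_n) = n$ for $n \not\equiv 2 \pmod{4}$ and $\gw(C_n) = n + 1$ for $n \equiv 2 \pmod{4}$, as claimed.

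I do not anticipate a real obstacle here: the argument is essentially a short piece of modular arithmetic, made possible by the coincidence that $|C_n|$ equals the exponent of $C_n$, which forces the only candidate subset to be the full group and collapses the entire combinatorial problem into the computation of a single coset.
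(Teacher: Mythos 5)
Your proposal is correct and takes essentially the same route as the paper: the reduction to the unique candidate set $C_n$ itself, together with the rewriting $\sum_g \epsilon_g g = \sigma - 2\sum_{g\in S^-}g$, is precisely the paper's Lemma~\ref{lem_weightsumset}, which the paper likewise uses to place $\sigma_{\pm}(T)$ in a coset of $2\cdot\langle e\rangle$. The only (cosmetic) difference is that you settle all congruence classes of $n$ uniformly via the observation $\Sigma^0(C_n)=C_n$, whereas the paper disposes of $n$ odd by citing $\gw(C_n)\le\g(C_n)=n$ and of $n\equiv 0\pmod 4$ by exhibiting an explicit choice of signs.
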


\begin{theorem}
\label{thm_c22n_w}
Let $n\in \mathbb{N}$.
For $n \ge 3$ we have
\[\gw(C_2 \oplus C_{2n})= 2n +2.\]
Moreover, $ \gw(C_2 \oplus C_{4})= \gw(C_2 \oplus C_{2})= 5$.
\end{theorem}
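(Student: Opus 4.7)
The plan is to prove matching lower and upper bounds. For the lower bound, I would exhibit explicit $(2n+1)$-element subsets of $C_2 \oplus C_{2n}$ with no plus-minus weighted zero-sum $2n$-subset, with the construction depending on the parity of $n$. Writing elements as $(a,b) \in \{0,1\} \times \mathbb{Z}/2n\mathbb{Z}$, the fact that $-a = a$ in $C_2$ means a signed zero sum on $T$ requires both $\sum_{x \in T} a_x \equiv 0 \pmod 2$ and $\sum_{x\in T} \epsilon_x b_x \equiv 0 \pmod{2n}$ for some $\epsilon_x \in \{\pm 1\}$. For $n$ odd, $S = (\{0\} \times C_{2n}) \cup \{(1,0)\}$ works: the only $2n$-subset with even first-coordinate parity is $\{0\} \times C_{2n}$, and its signed second-coordinate sums all have parity $n(2n-1) \equiv 1 \pmod 2$. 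For $n$ even with $n \ge 4$, $S = \{(0,k) : 0 \le k \le 2n-3\} \cup \{(1,0),(1,2),(1,4)\}$ works: since $|S_0| = 2n-2$, any valid $T$ has $|T \cap S_1| = 2$, and the resulting signed sum has parity $(n-1)(2n-3)$ plus an even contribution, which is odd (both factors being odd for $n$ even).

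For the upper bound, let $S \subseteq C_2 \oplus C_{2n}$ with $|S| = 2n+2$, put $S_\alpha = \{(a,b) \in S : a = \alpha\}$, and $B_\alpha = \{b : (\alpha,b) \in S\}$. The structural cornerstone is that $|B_0| + |B_1| = 2n+2$ while $|B_0 \cup B_1| \le 2n$, so $|B_0 \cap B_1| \ge 2$: at least two second coordinates are \emph{doubled} in $S$, and in particular $|S_0|, |S_1| \ge 2$. The goal is to find a two-element $R \subseteq S$ such that $T = S \setminus R$ has the correct first-coordinate parity and a signed zero sum of second coordinates. I would do a case analysis on $(|S_0|, |S_1|)$.

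The tightest case is $|S_1| = 2$ (so that $S_0 = \{0\} \times C_{2n}$), together with its symmetric counterpart $|S_1| = 2n$. For $n$ even, Theorem \ref{thm_cyclic_pm} gives $\gw(C_{2n}) = 2n$, whence $T = S_0$ itself has a signed zero sum. For $n$ odd, writing $S_1 = \{(1,b_1),(1,b_2)\}$, I would take $R = \{(0,k_1),(0,k_2)\} \subseteq S_0$ with $k_1 + k_2 \equiv 1 + b_1 + b_2 \pmod 2$ (such a pair exists since $n \ge 3$ provides many pairs of each parity sum in $C_{2n}$) and then invoke the auxiliary claim that the set $\Sigma_{C_{2n} \setminus \{k_1, k_2\}}$ of signed sums fills the entire parity coset of size $n$ in $C_{2n}$. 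Each of the values $\pm b_1 \pm b_2$ lies in that coset by the parity-matching choice of $k_1, k_2$, and some such value is $-(\epsilon_1 b_1 + \epsilon_2 b_2)$, yielding a signed zero sum of $T$. The intermediate cases $3 \le |S_1| \le 2n-1$ admit more flexible choices of $R$ (one element from each $S_\alpha$ when $|S_1|$ is odd, or two from one side when even) and should succumb to analogous arguments exploiting doubled pairs.

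The chief obstacle is precisely the auxiliary claim on $\Sigma_{C_{2n} \setminus \{k_1, k_2\}}$ and verifying that the case analysis closes in every subcase; both should follow from careful computations on signed subset sums in cyclic groups in the spirit of the proof of Theorem \ref{thm_cyclic_pm}. For the small cases: when $n = 1$, $|C_2 \oplus C_2| = 4 < 5$ renders the upper bound vacuous, and every element being self-inverse precludes any $2$-subset from having signed zero sum, giving the lower bound. When $n = 2$, any $4$-subset of $C_2 \oplus C_4$ of odd first-coordinate parity witnesses $\gw \ge 5$ (it cannot admit an even-parity $4$-subset, namely itself), while $\gw \le 5$ is established by a direct case-check on $5$-subsets based on $(|S_0|, |S_1|)$.
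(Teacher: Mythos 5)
Your lower-bound constructions and the treatment of $n=1,2$ are correct and essentially the paper's: the odd-$n$ example is the one produced by Lemma \ref{lem_lb_gen} from the full cyclic group plus one element of order $2$, and the even-$n$ example is (up to a harmless shift of the base set) exactly the sequence of Lemma \ref{lem_lb_w}, with the same parity verification. The problem is the upper bound, which is where almost all of the content of the theorem lies, and there your proposal has a genuine gap in two places. First, the ``auxiliary claim'' that the plus-minus weighted sums of $C_{2n}\setminus\{k_1,k_2\}$ fill an entire parity coset is not proved, and it is not a routine computation: since $2$ is not invertible modulo $2n$, Lemma \ref{lem_weightsumset} only confines $\sigma_{\pm}$ to a coset of $\langle 2e\rangle$ and gives no lower bound on its size, so you would need a separate subset-sum argument in $\mathbb{Z}/n\mathbb{Z}$ with its own edge cases. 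Second, and more seriously, the entire range $3\le |S_1|\le 2n-1$ is dismissed with ``should succumb to analogous arguments''; these cases require choosing which elements to omit from \emph{both} $S_0$ and $S_1$ while simultaneously controlling the first-coordinate parity and realizing a prescribed value as a signed sum, and nothing in the sketch shows how the parity obstruction is overcome there. As written, the upper bound is not established for a single value of $n\ge 3$.

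It is worth contrasting this with how the paper avoids these difficulties. For even $n$ it does not argue about weighted sums at all: it invokes the unweighted bound $\g(C_2\oplus C_{2n})\le 2n+2$ (Proposition \ref{prop_22n_pair}), which trivially dominates $\gw$. For odd $n$ it rewrites $C_2\oplus C_{2n}\cong C_2\oplus C_2\oplus C_n$ and projects onto the \emph{odd-order} factor $C_n$; there Lemma \ref{lem_weightsumset} gives $|\sigma_{\pm}(S)|=|\Sigma^0(S)|\ge 1+|\supp(S)\setminus\{0\}|$ with no parity-coset confinement, so two of the four fibers already have weighted sumsets of size $\ge (n+1)/2$ each and Lemma \ref{lem_fullgroup} forces their sum to be all of $C_n$. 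That change of decomposition is precisely the idea your approach is missing, and without it (or a worked-out substitute for the auxiliary claim and the intermediate cases) the proof does not close.
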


For cyclic groups one has again a certain dependence on parity, yet somewhat surprisingly this phenomenon does not appear for $C_2 \oplus C_{2n}$, and the result (albeit not its proof) is independent of $n$, except for the expected and known exception for $n=1$ and the phenomenon that for $n=2$ the value is smaller by one than one might expect. In addition, we establish an analogue of \cite[Lemma 10.1]{gaogersurvey} in the presence of plus-minus weights (see Corollary \ref{cor_G+1_weighted}).

\section{Preliminaries and notation}
\label{prel}

We collect some definitions and notations we use frequently.  By $\mathbb{N}$ and $\mathbb{N}_0$ we denote the set of positive and non-negative integers, respectively. For reals $a, b $ we denote by $[a, b] = \{x \in  \mathbb{Z} \colon a  \le x \le b \}$.

Let $G$ be a finite abelian group; we use additive notation. We denote by $C_n$ a cyclic group of order $n$. There are uniquely determined $1< n_1 \mid \dots \mid n_r$ such that $G \cong C_{n_1} \oplus \dots \oplus C_{n_r}$. We call $n_r$ the exponent of $G$, denoted $\exp(G)$; the exponent of a group of order $1$ is $1$.
We call $r$ the rank of $G$; the rank of a group of order $1$ is $0$. The $p$-rank of $G$, for $p$ a prime, is the number of $i$ such that  $p \mid n_i$.

For $G = \oplus_{i=1}^s G_i$, by the projection $\pi_i:G \to G_i$ we mean the group homomorphism $g_1 + \dots + g_s \mapsto g_i$; this depends on the direct sum decomposition not just the groups $G$ and $G_i$, yet at least implicitly it will be clear which decompositions we mean.

A sequence over $G$ is an element of the free abelian monoid over $G$, which we denote multiplicatively.
In other words, for each sequence $S$ over $G$ there exist up to ordering uniquely determined $g_1, \dots, g_{\ell} \in G$ (possibly some of them equal) such that $S= g_1 \dots g_{\ell}$; moreover, there exist unique $v_g \in \mathbb{N}_0$ such that $S= \prod_{g \in G}g^{v_g}$. The neutral element of this monoid, the empty sequence, is denoted by $1$. We denote by $|S|= \ell$ the length of $S$ and by $\sigma(S) = \sum_{i=1}^{\ell
} g_i $ its sum. The set $\{g_1, \dots, g_{\ell}\}$ is called the support of $S$, denoted $\supp(S)$. The sequence $S$ is called square-free if all the $g_i$ are distinct. We say that $T$ is a subsequence of $S$, if $T$ divides  $S$ in the monoid of sequences  that is $T= \prod_{i \in I }g_i  $ for some $I\subset [1, \ell]$. For $T\mid S$ a subsequence we denote by $ST^{-1}$ the sequence fulfilling $T (ST^{-1}) =S $, in other words $ST^{-1} = \prod_{i \in [1,  \ell] \setminus I  }g_i$ if  $T= \prod_{i \in I }g_i  $.

There is an immediate correspondence between squarefree sequences over $G$ and subsets of $G$, in other words we could identify $S$ with $\supp(S)$. While in this  paper we are mainly interested in  squarefree sequences, that is sets, we still use the formalism and language of sequences rather than that of sets. On the one hand, we do so for consistency with other work, yet on the other hand regarding certain aspects there is an actual difference regarding the meaning of standard constructions (see below).

For $W \subset \mathbb{Z}$, we call $ \sum_{i=1}^{\ell}  w_i g_i $ with $w_i \in W$ a $W$-weighted sum of $S$; when arising in this context we refer to $W$ as the set of weights. Moreover, we denote by $\sigma_W( S) = \{   \sum_{i=1}^{\ell}  w_i g_i \colon w_i \in W \}$ the set of all $W$-weighted sums of $S$.

In addition, we need the following notations:
\[
\Sigma_{W}(S)= \left\{   \sum_{i \in I }  w_i g_i \colon w_i \in W ,\, \emptyset \neq I \subset [1,\ell] \right\} = \bigcup_{1 \neq T \mid S} \sigma_W(T)
\]
the set of all $W$-weighted subsums of $S$ as well as the variant
\[
\Sigma_{W}^0(S)= \left\{ \sum_{i \in I }  w_i g_i \colon w_i \in W ,\, I \subset [1,\ell] \right\} = \bigcup_{ T \mid S} \sigma_W(T)
\]
where the empty subsum is also permitted.
Note that we always have that $\Sigma_{W}^0(S) = \Sigma_{W}(S) \cup \{0\}$, yet not always  $\Sigma_{W}^0(S) \setminus \{0\} = \Sigma_{W}(S)$.

Moreover, we denote
\[
\Sigma_{W,k}(S)= \left\{ \sum_{i \in I }  w_i g_i \colon w_i \in W ,\, \emptyset \neq I \subset [1,\ell] , \, |I|=k \right\} = \bigcup_{1 \neq T \mid S,\, |T|=k} \sigma_W(T)\]
the set of all $W$-weighted subsums of $S$ of length $k$.

For $W=\{1\}$, one recovers the usual notions in the non-weighted case, and we drop the subscript $W$ in this case, except for the fact that strictly speaking
$\sigma_{\{1\}}(S)$ is not $\sigma(S)$ but rather the set containing $\sigma(S)$. We continue to consider $\sigma(S)$ as an element of $G$ rather than as a singleton  set containing this element. Moreover, we use the symbol $\pm$ instead of $W$ for $W= \{+1, -1\}$ and speak of plus-minus weighted sums.

For a map $ \varphi \colon G \to G'$, where $G'$ denotes an abelian group, there is a unique continuation of $\varphi$ to a monoid homomorphism from the monoid of sequences over $G$ to the monoid of sequences over $G'$, which we continue to denote by $\varphi$. More explicitly,   $\varphi(S) = \varphi(g_1) \dots \varphi(g_{\ell})$. We point out that if $\varphi$ is not injective, then the image under $\varphi$ of a squarefree sequence might not be a squarefree sequence, yet we always have $|S|= |\varphi(S)|$. Here, the situation would be different if we consider $S$ as a set, and this is a main reason why we prefer to work with sequences.
If $\varphi$ is not only a map, but in fact a group homomorphism, then $\varphi(\sigma (S))=\sigma (\varphi (S))$, and likewise for $\sigma_W$, $\Sigma_W^{0}$, $\Sigma_W$ and $\Sigma_{W,k}$.

Let $A, B\subset G  $ then $A+B = \{a+b \colon a\in A, \, b \in B\}$ denotes the sum of the sets $A$ and $B$, and  $A \hat{+}  B = \{a+b \colon a\in A, \, b \in B , \, a \neq b\}$ the restricted sum of $A$ and $B$. For $g \in G$, we write $g + A$ instead of $\{g\} + A$.
For $k \in \mathbb{Z}$, we denote by $k \cdot A = \{k a \colon a \in A\}$ the dilation of $A$ by $k$, not the $k$-fold sum of $A$ with itself. We write $-A$ instead of $(-1) \cdot A$.
Also, for $S= g_1 \dots g_{\ell}$ a sequence we use the notations $g+ S$ to denote the sequence $(g + g_1) \dots (g + g_{\ell})$ and $-S$ to denote the sequence $(-g_1) \dots (-g_{\ell})$.

\section{Main definitions and auxiliary results}

The focus of this paper is the investigation of the Harborth constant and its plus-minus weighted analogue. We recall its definition, for arbitrary set of weights, as well as related definitions in a formal way.

\begin{definition}
Let $G$ be a finite abelian group. Let $W \subset \mathbb{Z}$.
The $W$-weighted  Harborth constant of $G$, denoted by $\g_W(G)$, is the smallest $\ell \in \mathbb{N}$ such that for each squarefree sequence over $G$ with $|S| \ge \ell$ we have $0 \in \Sigma_{W, \exp(G)}(S)$.
\end{definition}
The (classical) Harborth constant is the special case $W= \{1\}$, i.e., without weights; the plus-minus weighted Harborth constant is the special case $W= \{+1,-1\}$; we denote them by $\g (G)$ and $\gw(G)$, respectively.

We also use the $W$-weighted Olson constant, defined in the same way, except that the condition is $0 \in \Sigma_{W}(S)$, that is we do not impose any condition on the length of the weighted zero-subsum (except for it being non-empty).

While we do not use them in this paper, but as we mentioned them in the Introduction, we recall that the $W$-weighted Erd\H{o}s--Ginzburg--Ziv constant, denoted $\mathsf{s}_W(G)$, is the constant one gets when replacing `squarefree sequence' by `sequence' in the definition of the $W$-weighted Harborth constant. Likewise, the analogue for sequence of the   $W$-weighted Olson constant is the $W$-weighted Davenport constant. Observe that in case $\{+1,-1\} \subset W$, the $W$-weighted Olson constant and the $W$-weighted Davenport constant are equal. Yet, this is in general not true for $\g_W(G)$ and $\mathsf{s}_W(G)$.

We start by establishing a simple general lemma on the behavior of the $W$-weighted Harborth constant with respect to direct sum decompositions of the group.

\begin{lemma}
\label{lem_lb_gen}
Let $G$ be a finite abelian group. Let $W \subset \mathbb{Z}$ be a set of weights. If $G= H\oplus K$ with $\exp(H) \mid \exp(K)$, then
\(
\g_W (G) \ge \mathsf{O}_W (H) +  \g_W (K) -1.
\)
\end{lemma}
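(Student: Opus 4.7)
The plan is to prove the lower bound by exhibiting a squarefree sequence $S$ over $G$ of length $\mathsf{O}_W(H) + \g_W(K) - 2$ that has no $W$-weighted zero-sum subsequence of length $\exp(G)$; this will force $\g_W(G) \ge \mathsf{O}_W(H) + \g_W(K) - 1$. Note first that the hypothesis $\exp(H) \mid \exp(K)$ gives $\exp(G) = \exp(K)$, so the length condition reduces to $\exp(K)$.

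I would start by invoking the definitions of the two constants on the summands. By definition of the $W$-weighted Olson constant, there exists a squarefree sequence $T$ over $H$ of length $\mathsf{O}_W(H) - 1$ with $0 \notin \Sigma_W(T)$, and by definition of $\g_W(K)$ there exists a squarefree sequence $U$ over $K$ of length $\g_W(K) - 1$ with $0 \notin \Sigma_{W,\exp(K)}(U)$. Using the inclusions $h \mapsto (h,0)$ and $k \mapsto (0,k)$, transport $T$ and $U$ into $G = H \oplus K$ and let $S$ be the concatenation of the two images, so $|S| = \mathsf{O}_W(H) + \g_W(K) - 2$.

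Two verifications are needed. First, $S$ is squarefree: within each of the two halves this is immediate, and the only possible collision between the halves is at $(0,0)$. For this I would observe that $0 \notin \supp(T)$, since otherwise the length-one subsequence $0$ of $T$ would give $w \cdot 0 = 0 \in \Sigma_W(T)$ for any $w \in W$, contradicting the choice of $T$. Second, suppose for contradiction that $S' \mid S$ satisfies $|S'| = \exp(K)$ and $0 \in \sigma_W(S')$, and split $S' = T'U'$ according to the two halves. Applying the projections $\pi_H$ and $\pi_K$ to the weighted-sum identity yields a $W$-weighted sum of $T'$ equal to $0$ in $H$ and a $W$-weighted sum of $U'$ equal to $0$ in $K$. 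If $|T'| \ge 1$, this contradicts $0 \notin \Sigma_W(T)$; otherwise $|T'| = 0$, hence $|U'| = \exp(K)$, contradicting $0 \notin \Sigma_{W,\exp(K)}(U)$.

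No step is a genuine obstacle; the construction is the standard ``direct sum'' lower-bound trick for zero-sum constants. The only mildly delicate point is the squarefree-ness of $S$, which reduces to the observation that $0$ cannot lie in the support of a sequence witnessing the Olson constant.
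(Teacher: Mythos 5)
Your proposal is correct and follows essentially the same route as the paper: concatenate an extremal sequence for $\mathsf{O}_W(H)$ with one for $\g_W(K)$ inside $H\oplus K$, and use the projections to show a weighted zero-subsum of length $\exp(G)=\exp(K)$ can neither meet the $H$-part nor avoid it. Your explicit check that the concatenation is squarefree (via $0\notin\supp(T)$) is a detail the paper leaves implicit, but the argument is the same.
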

\begin{proof}
Let $B$ be a square-free sequence over $K$ of length $\g_W (K) -1$ that does not contain a $W$-weighted zero-subsum of length $\exp(K)$. Furthermore, let $A$ be a square-free sequence over $H$ of length $\mathsf{O}_W (H) -1$ that does not contain a nonempty $W$-weighted zero-subsum. Since  $\exp(H) \mid \exp(K)$, we have $\exp(K)= \exp(G)$.
We assert that the sequence $AB$ cannot have a zero-subsum of length $\exp(G)$. On the one hand, such a subsum cannot contain an elements from $A$, since those would yield a nonempty $W$-weighted zero-subsum of $A$. On the other hand, it cannot contain no element of $A$, since then it would yield a $W$-weighted zero-subsum of $B$ of length $\exp(G)= \exp(K)$. Therefore, it $AB$ cannot have such a subsum and $\gw_W(G)>|AB|=  (\mathsf{O}_W (H) - 1 ) +  (\g_W (K) -1) $, establishing the claim.
\end{proof}

We need the following (well-known) lemma; for lack of a suitable reference for the second part, we include the short proof.

\begin{lemma}
\label{lem_fullgroup}
Let $G$ be a finite abelian group, and let $t$ denote the $2$-rank of $G$. Let $A,B \subset G$ nonempty subsets.
\begin{enumerate}
\item Suppose $|A| + |B| \ge  |G| + 1 $. Then $A+B = G$.
\item Suppose $|A| + |B| \ge  |G |+ 1 + 2^t$. Then $A \hat{+} B = G$.
\end{enumerate}
\end{lemma}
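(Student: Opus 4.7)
The plan is to handle both parts by a single translation-and-counting trick. For an arbitrary $g \in G$, a representation $g = a + b$ with $a \in A$ and $b \in B$ is equivalent to an element $a \in A \cap (g - B)$, so the key quantity throughout is $|A \cap (g - B)|$. A basic inclusion--exclusion inside $G$ gives the uniform lower bound
\[
|A \cap (g-B)| \ge |A| + |g-B| - |G| = |A| + |B| - |G|.
\]

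For part (1) this is $\ge 1$ by hypothesis, so $A \cap (g-B) \ne \emptyset$ and $g \in A + B$. For part (2) the same bound yields $|A \cap (g-B)| \ge 2^t + 1$. Each $a$ in this intersection produces a representation $g = a + b$ with $b = g-a \in B$, and this representation contributes to $A \hat{+} B$ precisely when $a \ne g - a$, i.e.\ when $2a \ne g$. So I would finish by counting how many $a \in G$ satisfy $2a = g$: the solution set of $2x = g$ is either empty or a coset of the $2$-torsion subgroup $G[2] = \{x \in G : 2x = 0\}$, and by the invariant factor decomposition $G \cong \bigoplus_{i=1}^r C_{n_i}$ one reads off $|G[2]| = 2^t$, since $C_{n_i}$ contributes a factor $2$ to $G[2]$ exactly when $n_i$ is even. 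Hence at most $2^t$ of our $\ge 2^t + 1$ representations can be ``diagonal'' ($a = b$), so at least one of them is genuine and $g \in A \hat{+} B$.

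The only point that needs any real input is the structural fact $|G[2]| = 2^t$ together with the identification of the bad representations as solutions to $2x = g$; this is exactly where the correction term $2^t$ in the hypothesis comes from. Beyond that, everything is a single inclusion--exclusion uniform in $g$, with no case distinction on $G$ or on whether $g$ lies in $2G$. I expect no real obstacle, so a clean linear proof should work in a few lines.
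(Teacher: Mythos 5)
Your argument is correct and is essentially the paper's own proof: the same inclusion--exclusion bound on the translated intersection (the paper writes it as $(g-A)\cap B$ rather than $A\cap(g-B)$), the same identification of the ``diagonal'' representations with solutions of $2x=g$, and the same use of the fact that these form at most a coset of the $2$-torsion subgroup of size $2^t$. No differences worth noting.
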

\begin{proof}
1. Let $g\in G$.  Since $|A| + |B| \ge |G| +1$ and $|g-A| =|A|$, it follows that $(g-A ) \cap B$ is nonempty. That is there are $a \in A$ and $b \in B $ such that $g-a= b$, that is $g=a+b$, so $g \in A+B$. Since $g$ was arbitrary, we have $A+B= G$.

\noindent
2. Let $g\in G$. Since $|A| + |B| \ge  |G| + 1 + 2^t$, it follows that $|(g-A) \cap B| \ge 1 + 2^t$. Thus there are (distinct) $a_i \in A$ and (distinct) $b_i \in B$ for $i \in [1, 2^t + 1]$ such that $g - a_i = b_i$. If for some $i$ we have $a_i \neq b_i$, then we get as above $g \in A \hat{+}B$ and the claim.
So, assume for each $i$, we have  $a_i = b_i$,  which implies $g= 2a_i$. However, for fixed $g$ the solutions of the equation $g = 2 X$, in other words the pre-image of $g$ under the group endomorphism `multiplication by $2$', are contained in a co-set to the subgroup of $G$ formed by the elements of order at most $2$.  Yet, this subgroups and thus the coset has cardinality $2^t$. So, it is impossible that $a_i = b_i$ and thus $g = 2a_i$ holds true for each  $a_i$, establishing the claim.
\end{proof}

We give a way to express, for $W=\{+1,-1\}$, the set of $W$-weighted sums of a  sequence in terms of notions not involving weights.

\begin{lemma}
\label{lem_weightsumset}
Let $G$ be a finite abelian group and let $S$ be a sequence over $G$. Then $\sigma_{\pm}(S) = - \sigma ( S) + 2 \cdot \Sigma^0(S)$.
In particular, if $|G|$ is odd, then $|\sigma_{\pm}(S)| = |\Sigma^0(S)| \ge 1 + |\supp(S) \setminus \{0\} |$.
\end{lemma}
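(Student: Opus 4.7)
The plan is to reduce plus-minus weighted sums to ordinary (unweighted) subsums via the affine substitution $\epsilon = -1 + 2\delta$. Write $S = g_1 \dots g_\ell$. Given any choice of weights $\epsilon_i \in \{+1,-1\}$, set $\delta_i = (\epsilon_i + 1)/2 \in \{0,1\}$. Then
\[
\sum_{i=1}^{\ell} \epsilon_i g_i \;=\; \sum_{i=1}^{\ell}(-1 + 2\delta_i) g_i \;=\; -\sigma(S) + 2 \sum_{i=1}^{\ell}\delta_i g_i.
\]
The map $(\epsilon_i)_i \mapsto (\delta_i)_i$ is a bijection between $\{+1,-1\}^\ell$ and $\{0,1\}^\ell$, and as $(\delta_i)_i$ ranges over $\{0,1\}^\ell$ the element $\sum_i \delta_i g_i$ ranges exactly over $\Sigma^0(S)$ (subsums of $S$, including the empty one with value $0$). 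This yields the claimed identity $\sigma_\pm(S) = -\sigma(S) + 2 \cdot \Sigma^0(S)$.

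For the ``in particular'' assertion, suppose $|G|$ is odd. Then $\gcd(2,|G|) = 1$, so the endomorphism ``multiplication by $2$'' is an automorphism of $G$; combined with the translation by $-\sigma(S)$, this shows that the map $\Sigma^0(S) \to \sigma_\pm(S)$, $x \mapsto -\sigma(S) + 2x$, is a bijection, whence $|\sigma_\pm(S)| = |\Sigma^0(S)|$. Finally, for the lower bound on $|\Sigma^0(S)|$ one simply exhibits distinct elements: the empty subsum contributes $0 \in \Sigma^0(S)$, and each $g \in \supp(S) \setminus \{0\}$ appears as the singleton subsum indexed by any $i$ with $g_i = g$. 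These $1 + |\supp(S) \setminus \{0\}|$ elements are pairwise distinct since the nonzero ones are distinct among themselves and different from $0$.

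There is no serious obstacle here; the whole point is the substitution trick in the first paragraph, after which the remaining statements are immediate consequences of standard facts (multiplication by $2$ is invertible modulo an odd number, and translations preserve cardinality).
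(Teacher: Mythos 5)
Your proof is correct and follows essentially the same route as the paper: the paper makes the identical substitution, rewriting $\sum_i \varepsilon_i g_i$ with $\varepsilon_i \in \{+1,-1\}$ as $-\sigma(S) + \sum_i \delta_i g_i$ with $\delta_i \in \{0,2\}$, which is your $\varepsilon = -1 + 2\delta$ in a slightly different guise. The paper leaves the ``in particular'' part implicit, and your justification of it (multiplication by $2$ is an automorphism for $|G|$ odd, plus counting the empty subsum and the singleton subsums) is exactly the intended argument.
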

\begin{proof}
Let $S= g_1 \dots g_k$. We have that  $g \in \sigma_{\pm}(S) $ if and only if $g = \sum_{i=1}^k \varepsilon_i g_i$ with $\varepsilon_i \in \{+ 1, - 1\}$. This is equivalent to
$g = -\sum_{i=1}^k  g_i + \sum_{i=1}^k \delta_i g_i$ with $\delta_i \in \{0, 2\}$. Now, $h= \sum_{i=1}^k \delta_i g_i$ with $\delta_i \in \{0, 2\}$ is just another way of saying $h \in 2 \cdot \Sigma^0 (S)$. The claim is established.
\end{proof}

For the clarity of the exposition we stated this lemma for plus-minus weighted sequences only, however the exact same argument allows to show that for distinct $v,w \in \mathbb{Z}$ one has $\sigma_{\{v,w \}}(S) =  v\sigma ( S) + (w-v) \cdot \Sigma^0(S)$, and the additional assertions with the condition $|G|$ odd replaced by $w-v$  co-prime to $|G|$.

\section{Cyclic groups}

In this section we proof our results for cyclic groups.
As mentioned in the Introduction, the value of $\g(C_n)$ is known for cyclic groups, more precisely, for $n \in \mathbb{N}$, we have
\begin{equation}
\label{eq_gcyc}
\g(C_n) = \begin{cases} n + 1  &  \text{for $n$ even}  \\ n &  \text{for $n$ odd}  \end{cases}.
\end{equation}

Now, we proceed to prove Theorem \ref{thm_cyclic_pm}, showing that in the presence of weights the situation is somewhat different, yet there is still a direct dependence on the `parity' of $n$, or to be precise on a congruence condition modulo $4$.

\begin{proof}[Proof of Theorem \ref{thm_cyclic_pm}]
We need to show that, for $n \in \mathbb{N}$,
$\gw(C_n)$ is equal to  $n + 1$  for  $n \equiv 2 \pmod{4}$ and equal to  $n$ otherwise.

In both cases we have $\gw (C_n) \ge n$.
For $n$ odd, the claim follows by noting that  $\gw (C_n)  \le \g(C_n)= n$, the last equality by \eqref{eq_gcyc}.

Suppose $n$ is even.
Let $C_n = \langle e \rangle$ and let $T = \prod_{i=0}^{n-1} (i e)$, which is the only squarefree sequence over $C_n$ of length $n$.
Now, we note that
\[
\sum_{i=0}^{n/2} i - \sum_{i=n/2 + 1}^{n-1} i  = 0 + \frac{n}{2} + \sum_{i=1}^{n/2-1} i -  \left( i+\frac{n}{2} \right) = \frac{n}{2} - \left( \frac{n}{2} - 1 \right) \frac{n}{2} = n  \left( 1 - \frac{n}{4} \right).
\]
Thus, for $n\equiv 0 \pmod{4}$, we have that the sum above is congruent to $0$ modulo $n$. Consequently,  $0 \in \sigma_{\pm} ( T)$. We have thus shown that for $n\equiv 0 \pmod{4}$, one has $\gw (C_n)  \le  n$.

It remains to show that  $0 \notin \sigma_{\pm} (T)$ for $n\equiv 2 \pmod{4}$. We note that $-\sigma(T) = -(n/2)(n-1)e = (n/2)e$. By Lemma \ref{lem_weightsumset} we have $\sigma_{\pm} (T) = -\sigma(T) + 2 \cdot \Sigma^{0}(T)$.
Since by assumption $n/2$ is odd, of course, $ (n- 2)/2 $ is even, and thus
\[
-\sigma(T) + 2 \cdot \Sigma^{0}(T) =  \left(e + \frac{n - 2}{4}   2 e \right)  + 2 \cdot \Sigma^0(T)   \subset e +  2 \cdot \langle  e \rangle .
\]
Since $2  \cdot \langle  e \rangle =  \langle 2 e \rangle$ is a proper subgroup of $\langle  e \rangle$, it follows that $0 \notin \sigma_{\pm} (T)$.
\end{proof}

Next, we determine all groups for which $\gw(G)> |G|$. The argument uses the preceding result as well as the analogous result in the classical case, which we recalled in the Introduction.

\begin{corollary}
\label{cor_G+1_weighted}
Let $G$ be a finite abelian group. Then $\gw (G) =  |G|+1 $ if and only if $G$ is an elementary $2$-group or a cyclic group of order congruent $2$ modulo $4$.
\end{corollary}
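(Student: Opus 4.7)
The plan is to deduce the corollary from two ingredients already in hand: the classical characterization $\g(G)=|G|+1$ iff $G$ is an elementary $2$-group or a cyclic group of even order (quoted in the Introduction from \cite[Lemma 10.1]{gaogersurvey}), and the formula for $\gw(C_n)$ just proved in Theorem \ref{thm_cyclic_pm}. The bridge between the classical and weighted versions is the elementary sandwich
\[
\gw(G) \le \g(G) \le |G|+1,
\]
where the left inequality holds because every squarefree zero-sum subsequence of length $\exp(G)$ is in particular a plus-minus weighted zero-sum subsequence of that length (take all weights $+1$), and the right inequality is vacuous since no squarefree sequence over $G$ has length exceeding $|G|$.

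For the ``if'' direction I would argue as follows. If $G$ is an elementary $2$-group, then $-g=g$ for every $g\in G$, so $\Sigma_{\pm,\exp(G)}(S)=\Sigma_{\exp(G)}(S)$ for every squarefree sequence $S$, hence $\gw(G)=\g(G)=|G|+1$ by the classical result. If $G=C_n$ with $n\equiv 2\pmod 4$, then $\gw(G)=n+1=|G|+1$ is exactly the content of Theorem \ref{thm_cyclic_pm}.

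For the ``only if'' direction, suppose $\gw(G)=|G|+1$. The sandwich above forces $\g(G)=|G|+1$, so by the classical result $G$ is either an elementary $2$-group or a cyclic group of even order. In the cyclic case, write $G=C_n$ with $n$ even; Theorem \ref{thm_cyclic_pm} gives $\gw(C_n)=n+1$ precisely when $n\equiv 2\pmod 4$, and $\gw(C_n)=n<n+1$ when $n\equiv 0\pmod 4$. Hence the cyclic groups appearing must satisfy $n\equiv 2\pmod 4$, which is exactly the list in the statement.

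There is no real obstacle here; the whole argument is a short combinatorial bookkeeping on top of Theorem \ref{thm_cyclic_pm} and the quoted classical lemma. The only point worth flagging explicitly in the write-up is the inequality $\gw(G)\le \g(G)$, since it is the mechanism that transfers the upper bound ``$|G|+1$ is unattainable'' from the unweighted setting to the weighted one for all groups outside the short list above.
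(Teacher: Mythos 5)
Your proposal is correct and follows essentially the same route as the paper: both reduce to the classical characterization of $\g(G)=|G|+1$ via the inequality $\gw(G)\le\g(G)$, handle elementary $2$-groups by noting the weighted and unweighted notions coincide there, and settle the cyclic case by Theorem \ref{thm_cyclic_pm}. Your write-up merely makes a few steps (the sandwich inequality and the vacuous upper bound $\g(G)\le|G|+1$) more explicit than the paper does.
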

\begin{proof}
We recall from the Introduction that it is known that $\g (G) = |G| + 1$ if and only if $G$ is an elementary $2$-group or a cyclic group of even order.
Since $\gw(G) \le \g (G)$, we thus have that  $\gw (G) \le  |G|$, for all other groups.
For $G$ an elementary $2$-group it is clear that $\gw(G) = \g(G)$, and $\gw(G)= |G| + 1$. It remains to consider the case that $G$ is a cyclic group of even order.
In this case Theorem \ref{thm_cyclic_pm} gives that $\gw(G)$ is equal to  $|G| $ for $|G|$ congruent to $0$ modulo $4$ and to $|G|+ 1$ for $|G|$ congruent to $2$ modulo $4$. This completes the argument.
\end{proof}

\section{Groups of the form $C_2 \oplus C_{2n}$}

In this section we determine $\g (C_2 \oplus C_{2n})$ and $\gw(C_2 \oplus C_{2n})$ for each $n$.
We first make explicit the lower bounds that follow from the results for cyclic groups established in the preceding section in combination with Lemma \ref{lem_lb_gen}. We have, for $n \in \mathbb{N}$,
\begin{equation}
\label{eq_c}
\g(C_2 \oplus C_{2n}) \ge 2n+2 \quad \text{and}
\end{equation}
\begin{equation}
\label{eq_w}
\gw (C_2 \oplus C_{2n}) \ge \begin{cases} 2n + 2  &  \text{for $n$ odd }   \\  2 n + 1 &  \text{for $n$ even}
\end{cases}.
\end{equation}
This follows directly by \eqref{eq_gcyc}, Theorem \ref{thm_cyclic_pm}, and Lemma \ref{lem_lb_gen}, using the simple fact that $\mathsf{O}(C_2)= \mathsf{O}_{\pm}(C_2)= 2$.

As can be seen by comparing these bounds with Theorems \ref{thm_c22n_c} and \ref{thm_c22n_w}  we show that sometimes but not always these bounds are sharp. We start by showing that for the plus-minus weighted version one in fact has $\gw(C_2 \oplus C_{2n}) \ge 2n+2$ for each $n \ge 3$ not just for odd ones.

\begin{lemma}
\label{lem_lb_w}
Let $n \in \mathbb{N}$ with $n \ge 3$. Then
$\gw(C_2 \oplus C_{2n}) \ge 2n+2$.
\end{lemma}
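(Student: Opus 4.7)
The plan is to prove the lower bound by exhibiting, for every $n \ge 3$, a squarefree sequence $S$ over $G = C_2 \oplus C_{2n}$ of length $2n+1$ that has no plus-minus weighted zero-subsum of length $\exp(G) = 2n$. The main tool is Lemma \ref{lem_weightsumset} read modulo the subgroup $2G$: since $\sigma_{\pm}(T) = -\sigma(T) + 2 \cdot \Sigma^{0}(T) \subseteq \sigma(T) + 2G$ for any sequence $T$, having $\sigma(T) \notin 2G$ is already enough to conclude $0 \notin \sigma_{\pm}(T)$. So I will aim to design $S$ so that every length-$2n$ subsequence has its sum outside $2G$.

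Write $e_1$ and $e_2$ for the standard generators of $C_2$ and $C_{2n}$. Then $2G = \{0\} \oplus 2 \cdot C_{2n}$ has index $4$ in $G$, with cosets represented by $0$, $e_1$, $e_2$, $e_1+e_2$, each of size $n$. I will take $\supp(S)$ contained in the union of the three cosets $0 + 2G$, $e_1 + 2G$, $e_2 + 2G$ (of total cardinality $3n \ge 2n+1$), and force $\sigma(S) \equiv e_1 + e_2 \pmod{2G}$. Then for any $g \in \supp(S)$, the element $\sigma(S) - g$ lies in one of the cosets $e_1 + 2G$, $e_2 + 2G$, or $(e_1 + e_2) + 2G$, none of which is $2G$. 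Hence $\sigma(S g^{-1}) \notin 2G$ and the length-$2n$ subsequence $S g^{-1}$ has no plus-minus weighted zero-sum, for every choice of $g$.

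The remaining task is the choice of multiplicities. Let $a, b, c$ denote the number of elements of $S$ selected from the cosets $0 + 2G$, $e_1 + 2G$, $e_2 + 2G$ respectively. I need $0 \le a, b, c \le n$, $a + b + c = 2n + 1$, and $b, c$ both odd, so that $\sigma(S) \equiv b\, e_1 + c\, e_2 \equiv e_1 + e_2 \pmod{2G}$. For $n$ odd the choice $(a, b, c) = (n, n, 1)$ works; for $n$ even the choice $(a, b, c) = (n - 1, n - 1, 3)$ works, which needs $n \ge 3$.

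I expect no real obstacle: the heart of the argument is the simple coset-avoidance observation in the first two paragraphs, and the only mildly technical point is the parity-based case split in the last step, which is precisely where the hypothesis $n \ge 3$ enters (the construction genuinely fails for $n = 2$, consistent with the smaller value recorded in Theorem \ref{thm_c22n_w}).
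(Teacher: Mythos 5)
Your proof is correct, and the construction is genuinely different from the paper's. The paper splits into parities: for $n$ odd it invokes the general product bound of Lemma \ref{lem_lb_gen} (combining the cyclic result with $\mathsf{O}_{\pm}(C_2)=2$), and for $n$ even it builds a specific sequence $TR$ with $T=\prod_{i=1}^{2n-2}(ie_2)$ and $R= e_1(e_1+2e_2)(e_1+4e_2)$, then analyzes which length-$2n$ subsequences are possible and confines their weighted sums to a nonzero coset of $\langle 2e_2\rangle$. You instead work uniformly modulo the index-$4$ subgroup $2G$, using the consequence of Lemma \ref{lem_weightsumset} that $\sigma_{\pm}(T)\subseteq \sigma(T)+2G$, and choose multiplicities $(a,b,c)$ of elements from three of the four cosets so that $\sigma(Sg^{-1})\notin 2G$ for every $g\mid S$. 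All the details check out: the parity conditions $b,c$ odd do force $\sigma(S)\equiv e_1+e_2 \pmod{2G}$, the three possible cosets of $\sigma(S)-g$ all avoid $2G$, the squarefreeness constraint $a,b,c\le n$ is respected by $(n,n,1)$ for $n$ odd and $(n-1,n-1,3)$ for $n\ge 4$ even, and every length-$2n$ subsequence of a squarefree sequence of length $2n+1$ is of the form $Sg^{-1}$. What your approach buys is uniformity (no appeal to Lemma \ref{lem_lb_gen} or to the cyclic case), a transparent explanation of why $n=2$ is exceptional (the coset $e_2+2G$ has only $n$ elements, so $c=3$ is unavailable), and flexibility in the choice of $S$; the paper's approach buys a very concrete witness sequence and reuses machinery already established for other bounds.
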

\begin{proof}
If $n$ is odd, this is just \eqref{eq_w}. Thus assume $n$ is even. Let $C_2 \oplus C_{2n} = \langle e_1 \rangle \oplus \langle e_2 \rangle $ with $\ord (e_1)= 2$ and $\ord (e_2)= 2n$. We consider the sequence $S=TR$ with $T=\prod_{i=1}^{2n-2}(i e_2)$ and $R= e_1(e_1 + 2e_2) (e_1 + 4e_2)$.

Since $n\ge 3$ we have $2n \ge 4$ and $R$ is square-free, and also $S$ is square-free and a sequence of length $2n+1$. It thus suffices to show that $0$ is not a plus-minus weighted subsum of length $2n$ of $S$. We observe that
\[
\sigma(T) =  \frac{(2n-2)(2n-1)}{2} e_2 = (n+1)e_2.
\]
It follows that $\sigma_{\pm}(T) = -\sigma(T) + 2 \cdot \Sigma^0(T)  \subset e_2 + \langle 2e_2 \rangle$; note that $n$ is even by assumption and thus $ne \in  \langle 2e_2 \rangle$.

Let $S' \mid S$ be a subsequence of length $2n$. It is clear that $S'$ cannot be a subsequence of $T$, since $|T|= 2n-2$. Moreover, if $0 \in \sigma_{\pm}(S')$, then $R'$ the subsequence of elements in $S'$ from $R$ has to have even length,  otherwise the projection to $\langle e_1 \rangle$ of a  weighted sum of $S'$ cannot be $0$. Consequently, $|R'|=2$, since it is non-zero, even and at most $3$. Therefore, $S' = R'T$. However, $\sigma_{\pm}(R') \subset \langle 2e_2 \rangle $ while  $\sigma_{\pm}(T) \subset e_2 + \langle 2e_2 \rangle$, so $0 \notin   \sigma_{\pm} (R')  +  \sigma_{\pm} (T)  = \sigma_{\pm} (R'T) $.
\end{proof}

We now establish an improvement for  the lower bound for $\g(C_2 \oplus C_{2n})$ for $n$ odd.

\begin{lemma}
\label{lem_lb_c}
Let $n \in \mathbb{N}$  odd. Then $\g(C_2 \oplus C_{2n}) \ge 2n+3$.
\end{lemma}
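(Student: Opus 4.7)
The plan is to construct a squarefree sequence $S$ of length $2n+2$ over $G = C_2 \oplus C_{2n}$ that has no zero-sum subset of size $\exp(G) = 2n$; this gives $\g(C_2 \oplus C_{2n}) \ge 2n+3$. Writing $G = \langle e_1 \rangle \oplus \langle e_2 \rangle$ with $\ord(e_1)=2$ and $\ord(e_2)=2n$, I would take the ``mirrored'' sequence
\[
S \;=\; \prod_{i=0}^{n} (i e_2) \; \cdot \; \prod_{i=0}^{n} (e_1 + i e_2),
\]
which is squarefree (its two factors lie in distinct cosets of $\langle e_2 \rangle$) and has length $2(n+1) = 2n+2$.

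The key step is the identity $\sigma(S) = 0$. This is where the parity hypothesis enters: since $n$ is odd, $n+1$ is even, so $(n+1)e_1 = 0$, and hence
\[
\sigma(S) \;=\; 2 \cdot \tfrac{n(n+1)}{2} e_2 \;=\; n(n+1) e_2 \;=\; 0,
\]
the last equality because $(n+1)/2 \in \mathbb{Z}$ forces $2n \mid n(n+1)$. A subset of $S$ of size $2n$ is obtained by deleting two distinct elements $x, y$, and its sum is $-(x+y)$. The entire problem thus reduces to verifying that $x + y \neq 0$ for every pair of distinct elements $x, y$ of $S$.

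This pair-sum check is a three-case analysis on the cosets of $x$ and $y$ modulo $\langle e_2 \rangle$. If $x$ and $y$ lie in different cosets, the $e_1$-component of $x+y$ equals $e_1 \neq 0$. If both lie in the same coset, then the $e_1$-contributions cancel and $x+y = (i+j)e_2$ with $i, j$ distinct in $[0,n]$, so $i+j \in [1, 2n-1]$ and $(i+j) e_2 \neq 0$ in $\langle e_2 \rangle$.

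The only substantive obstacle is guessing the construction. The naive extensions of the extremal sequence for $C_{2n}$ --- taking the full coset $\langle e_2 \rangle$ together with one or two added elements of $e_1 + \langle e_2 \rangle$ --- fail, because every residue modulo $2n$ arises as a sum of two distinct elements of $[0, 2n-1]$. Restricting both cosets to their ``lower halves'' $\{0, \dots, n\}$ is what makes the two competing requirements $\sigma(S)=0$ and ``no pair summing to zero'' simultaneously satisfiable, and this is possible exactly because $n$ is odd.
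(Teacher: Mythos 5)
Your proof is correct and follows essentially the same strategy as the paper: exhibit a squarefree sequence of length $2n+2$ with $\sigma(S)=0$ and no two distinct elements summing to $0$, then pass to the length-$2$ complement of a putative zero-sum subsequence of length $2n$. The paper's extremal sequence is built differently --- it uses the decomposition $C_2\oplus C_{2n}\cong C_2\oplus C_2\oplus C_n$ (valid since $n$ is odd) and takes four translated/reflected copies of a half-sized block in $\langle e\rangle$ --- but your ``two lower halves'' witness works just as well and the verification is identical.
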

\begin{proof}
First, we observe that the result holds for $n=1$ as then the group is an elementary $2$-group (see the Introduction).  Thus, assume $n \neq 1$.
Since $n$ is odd, we have $C_2 \oplus C_{2n}= \langle f_1\rangle  \oplus \langle f_2 \rangle \oplus \langle e \rangle $ with $\ord (f_i) = 2 $ and $ \ord (e)= n$.
Let $\pi_1$ denote the projection to $\langle f_1\rangle  \oplus \langle f_2 \rangle$ and let $\pi_2$ denote the one to $ \langle e \rangle$.

We construct a squarefree sequence $A$ of length $2n+2$ without zero-sum subsequence of length $2n$.
Let $B = 0 \prod_{i=1}^{(n-1)/2} (ie)$ and let
\[
A = B (f_1 +f_2  + B) (f_1 - B) (f_2 - B).
\]
Then $|A| = 4(n+1)/2 = 2n+2$ and $A$ is squarefree. Moreover, $\sigma(A)= 0$, which can be seen directly by considering $\pi_1(A)$ and $\pi_2(A)$.
Suppose $A$ has a zero-sum subsequence $S$ of length $2n$. Then $AS^{-1}$ is also a zero-sum subsequence of $A$, and it has length two.
Yet, $A$ cannot have a zero-sum subsequence of length two, since there are no two elements $gh\mid A$ such that $g+h = 0$.
\end{proof}

Next, we establish the upper bound for $\g(C_2 \oplus C_{2n})$ for even $n$. This result is also used in the proof of Theorem \ref{thm_c22n_w}, thus we formulate it separately.

\begin{proposition}
\label{prop_22n_pair}
Let $n \in \mathbb{N}$  even. Then
$\g(C_2 \oplus C_{2n}) \le  2n+2$.
\end{proposition}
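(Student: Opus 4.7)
My plan is to pass to complements: a length-$2n$ subsequence $T \mid S$ summing to $0$ corresponds to its length-$2$ complement $ST^{-1}$ summing to $c := \sigma(S)$. So given a squarefree sequence $S$ of length $2n+2$ over $G = C_2 \oplus C_{2n}$, it suffices to produce two distinct elements $g, h \in \supp(S)$ with $g + h = c$.

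By inclusion-exclusion, $|\supp(S) \cap (c - \supp(S))| \geq 2(2n+2) - |G| = 4$. Any $g$ in this intersection gives a pair $\{g, c-g\} \subseteq \supp(S)$ summing to $c$, and the two elements are distinct unless $2g = c$. The solution set of $2x = c$ in $G$ is either empty or a coset of the $2$-torsion subgroup $G[2] = \{x \in G : 2x = 0\}$, of size exactly $|G[2]| = 4$. So the only obstruction to finding an admissible pair is the \emph{bad case}: $c = 2 g_0$ for some $g_0 \in G$ with $g_0 + G[2] \subseteq \supp(S)$.

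Ruling out the bad case is the main obstacle, and it is where the hypothesis that $n$ is even enters. Translating by $-g_0$, set $S' = (-g_0) + S$; then $S'$ is squarefree of length $2n+2$, contains the subgroup $G[2]$, and by assumption has no pair of distinct elements summing to $0$. Writing $S' = H \cdot S''$ with $\supp(H) = G[2]$ and $|S''| = 2n - 2$, the no-pair condition together with $g \neq -g$ for every $g \in \supp(S'') \subseteq G \setminus G[2]$ forces $\supp(-S'') \cap \supp(S'') = \emptyset$. A cardinality count $4 + 2(2n-2) = 4n = |G|$ then upgrades this to the disjoint partition $G = G[2] \sqcup \supp(S'') \sqcup \supp(-S'')$.

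Finally I will contradict this partition via the projection $\pi\colon G \to \langle e_1 \rangle \cong C_2$, where $G = \langle e_1 \rangle \oplus \langle e_2 \rangle$ with $\ord(e_1) = 2$ and $\ord(e_2)=2n$. Since $\exp(G) = 2n$ annihilates $g_0$, a direct computation gives $\sigma(S'') = \sigma(S') = \sigma(S) - (2n+2)g_0 = c - 2g_0 = 0$, so $\pi(\sigma(S'')) = 0$. On the other hand, negation is trivial in $C_2$, so $\pi(\supp(S''))$ and $\pi(\supp(-S''))$ coincide as multisets; combined with the partition and the observation that each of the $n-1$ non-trivial cosets of $G[2]$ contributes two $0$s and two $e_1$s under $\pi$, one concludes that $\pi(\supp(S''))$ contains each of $0$ and $e_1$ exactly $n-1$ times. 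Hence $\pi(\sigma(S'')) = (n-1) e_1 = e_1 \neq 0$ since $n - 1$ is odd, contradicting the previous vanishing. This rules out the bad case and completes the argument.
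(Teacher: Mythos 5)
Your proof is correct, and it takes a genuinely different route from the paper's. The paper splits $A=A_0A_1$ according to the projection onto the $C_2$-summand and then works inside the cyclic summand $\langle e_2\rangle\cong C_{2n}$, invoking Lemma~\ref{lem_fullgroup}: part (1) when $\pi_1(\sigma(A))\neq 0$ (an ordinary sumset $\supp(\pi_2(A_0))+\supp(\pi_2(A_1))$), and part (2), the restricted-sumset version, when $\pi_1(\sigma(A))=0$, where the parity of $n$ is used only to upgrade $|A_x|\ge n+1$ to $|A_x|\ge n+2$ so that the cardinality hypothesis of part (2) is met. You instead work with the restricted sum $\supp(S)\hat{+}\supp(S)$ in the whole group $G$; there the analogous cardinality bound $2(2n+2)\ge |G|+1+2^2$ fails by exactly one, so you cannot quote Lemma~\ref{lem_fullgroup}(2) and must instead analyze the unique borderline obstruction, namely $\supp(S)\cap(\sigma(S)-\supp(S))$ being exactly a coset $g_0+G[2]$. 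Your elimination of this configuration --- translating so that $G[2]\subseteq\supp(S')$, deducing the partition $G=G[2]\sqcup\supp(S'')\sqcup\supp(-S'')$, and then comparing $\pi(\sigma(S''))=0$ against the multiset count $(n-1)e_1=e_1$ --- is a self-contained parity argument and is where the evenness of $n$ enters for you. The paper's route is shorter because projecting to $C_{2n}$ makes the counting non-borderline; yours is more structural in that it pins down exactly what an extremal configuration would have to look like. One small point worth making explicit in your write-up: the equality $\sigma(S'')=\sigma(S')$ uses that the four elements of $G[2]\cong C_2\oplus C_2$ sum to $0$.
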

\begin{proof}
Let $C_2 \oplus C_{2n} = \langle e_1 \rangle \oplus \langle e_2 \rangle $ with $\ord (e_1)= 2$ and $\ord (e_2)= 2n$; let $\pi_1$ and $\pi_2$ denote the projections on $\langle e_1 \rangle$ and $\langle e_2 \rangle$, respectively.

Let $A$ be a square-free sequence over  $C_2 \oplus C_{2n}$ of length $2n+2$ and write $A=A_0A_1$ where $\pi_1(g)$ is $0$ and $e_1$ for $g \mid A_0$ and $g \mid A_1$, respectively.
We distinguish the two cases $\pi_1(\sigma(A))\neq 0$ and $\pi_1(\sigma(A))= 0$.

Assume $\pi_1(\sigma(A))\neq 0$, that is $\pi_1(\sigma(A))= e_1$. Since $\pi_2(A_i)$ is squarefree and since we have $|\pi_2(A_0)| + |\pi_2(A_1)| = |A| = 2n + 2 > 2n$, it follows by Lemma \ref{lem_fullgroup} that $\supp(\pi_2(A_0)) + \supp(\pi_2(A_1)) = \langle e_2\rangle$, that is each element of $ \langle e_2 \rangle$ is the sum of an element appearing in $\pi_2(A_0)$ and an element appearing in $\pi_2(A_1)$. Thus, let $g_i \mid A_i$ such that $\pi_2(g_0) + \pi_2(g_1) = \pi_2 (\sigma (A))$.
Set $S= A(g_0g_1)^{-1}$. Then $\pi_2 (\sigma (S)) = \pi_2 (\sigma (A)) -  (\pi_2(g_0) + \pi_2(g_1)) = 0 $.
Moreover, $\pi_1 (\sigma (S)) = \pi_1 (\sigma (A)) -  (\pi_1(g_0) + \pi_1(g_1)) = e_1 - e_1 = 0$.
Thus $\sigma(S)= 0$. Since $|S| = (2n+2)- 2 = 2n$, the argument is complete in this case.

Assume $\pi_1(\sigma(A))= 0$. Let $\{x,y\}= \{0,1\}$ such that $|A_x | \ge |A_y|$. We have $|A_x|\ge (2n + 2)/2 = n+1$. Yet, note that if $|A_x| = n+1$, then also $|A_y| = n+1$, so $|A_1|=n+1$; this contradicts $\pi_1(\sigma(A))= 0$ since $\pi_1(\sigma(A))= |A_1|e_1$ and $n+1$ is odd as $n$ is even by assumption (this is the only place where we use this assumption). Consequently, $|A_x| \ge n+2$.

By Lemma \ref{lem_fullgroup} we have $\supp(\pi_2 (A_x)) \hat{+} \supp(\pi_2 (A_x)) = \langle e_2 \rangle$. Let $gh\mid A_x$ such that $\pi_2(g) + \pi_2(h) = \pi_2 (\sigma (A))$, which exist by the just made observation.
Set $S= A(gh)^{-1}$. Then $\pi_2 (\sigma (S)) = \pi_2 (\sigma (A)) -  (\pi_2(g) + \pi_2(h)) = 0 $.
Moreover, $\pi_1 (\sigma (S)) = \pi_1 (\sigma (A)) -  (\pi_1(g) + \pi_1(h)) =0 -( e_1 + e_1) = 0$.
Thus $\sigma(S)= 0$. Since $|S| = (2n+2)- 2 = 2n$, the argument is again complete in this case.
\end{proof}

The following technical result is used in the proofs of both Theorem \ref{thm_c22n_c} and Theorem \ref{thm_c22n_w}.
We only need it for $n$ odd, when the group is isomorphic to $C_2 \oplus C_{2n}$, yet as it does not cause any complication we state it for general $n$.

\begin{proposition}
\label{prop_nonzero}
Let $n \in \mathbb{N}$. Let  $\pi_1: C_2 \oplus C_2 \oplus C_n \to C_2 \oplus C_2$.
Let $A$ be a squarefree sequence over $ C_2 \oplus C_2 \oplus C_n$ of length $2n+2$. If $\sigma ( \pi_1 (A)) \neq 0$, then $A$ has a zero-sum subsequence of length $2n$.
\end{proposition}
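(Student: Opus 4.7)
The plan is to exhibit two distinct elements $g, h \mid A$ with $g+h = \sigma(A)$; then $A(gh)^{-1}$ will be a zero-sum subsequence of length $2n$. I would denote by $\pi_2$ the projection onto $C_n$ and, for each $I \in C_2 \oplus C_2$, write $A_I$ for the subsequence of $A$ consisting of those elements whose $\pi_1$-image is $I$, with $a_I = |A_I|$. Since $A$ is squarefree and each fiber $\pi_1^{-1}(I)$ has exactly $n$ elements, one has $a_I \le n$, and $\pi_2(A_I)$ is a squarefree sequence over $C_n$ of length $a_I$.

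Write $v = \sigma(\pi_1(A))$, which is nonzero by hypothesis. Since $v \ne 0$, the group $C_2 \oplus C_2$ splits into two disjoint unordered pairs $\{I, I+v\}$; label them $P_1 = \{I_1, J_1\}$ and $P_2 = \{I_2, J_2\}$ with $I_k + J_k = v$ and $I_k \ne J_k$. The key claim is that some pair $P_k$ satisfies $a_{I_k}, a_{J_k} \ge 1$ \emph{and} $a_{I_k} + a_{J_k} \ge n+1$. Granting this, Lemma \ref{lem_fullgroup}(1) applied in $C_n$ to the nonempty sets $\supp(\pi_2(A_{I_k}))$ and $\supp(\pi_2(A_{J_k}))$ yields that their sumset is all of $C_n$. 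In particular $\pi_2(\sigma(A))$ lies in this sumset, so I can pick $g \mid A_{I_k}$ and $h \mid A_{J_k}$ with $\pi_2(g) + \pi_2(h) = \pi_2(\sigma(A))$. By construction $\pi_1(g) + \pi_1(h) = v = \pi_1(\sigma(A))$, hence $g + h = \sigma(A)$, and $g \ne h$ because their $\pi_1$-images differ.

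To establish the claim I would argue by contradiction. Assuming both pairs fail, the identity $a_{I_1} + a_{J_1} + a_{I_2} + a_{J_2} = 2n+2$ guarantees that at least one pair, say $P_1$, has sum at least $n+1$; since $P_1$ is supposed to fail, one of $a_{I_1}, a_{J_1}$ must vanish, forcing the other to be at least $n+1$, which contradicts $a_I \le n$. I expect this short counting to be the only mild obstacle in the argument; everything else is bookkeeping around the two projections and one application of Lemma \ref{lem_fullgroup}(1).
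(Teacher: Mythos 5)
Your proposal is correct and follows essentially the same route as the paper: partition $A$ by the fibers of $\pi_1$, pair the fiber classes so that each pair sums to $\sigma(\pi_1(A))$, use the pigeonhole bound $n+1$ on the larger pair, and apply Lemma \ref{lem_fullgroup}(1) to remove two elements summing to $\sigma(A)$. Your explicit check that both classes in the chosen pair are nonempty (via $a_I \le n$) is a point the paper leaves implicit, but otherwise the arguments coincide.
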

\begin{proof}
For $n=1$,  the claim is vacuously true as the only squarefree sequence of length $4$ over $ C_2 \oplus C_2 \oplus C_1$ has sum $0$. Thus assume $n \neq 1$.

We write $ C_2 \oplus C_2 \oplus C_n =   \langle f_1\rangle  \oplus \langle f_2 \rangle \oplus \langle e \rangle $ with $\ord (f_i) = 2 $ and $ \ord (e)= n$.
In addition to $\pi_1$, let $\pi_2$ denote the projection  $  C_2 \oplus C_2 \oplus C_n \to  \langle e \rangle$.
We denote the elements of  $\langle f_1\rangle  \oplus \langle f_2 \rangle$ by  $\{0,a,b,c\}$ where $0$ is of course the neutral element; note that in any case $b+c = a$. Now we write $A=A_0A_a A_b A_c$ where $\pi_1(g)=x$  for $g \mid A_x$. As in the claim of the result, suppose  $\sigma ( \pi_1 (A)) \neq 0$, say,  $\sigma ( \pi_1 (A)) = a$.

Let $(B_1,B_2)$ equal $(A_0,A_a)$ or $(A_b,A_c)$ such that $|B_1| + |B_2| =  \max \{ |A_0|+|A_a |,     |A_b|+|A_c| \}$. Then $|B_1|+|B_2| \ge (2n+2)/2 = n+1$. Thus by Lemma \ref{lem_fullgroup}  $ \supp (\pi_2(B_1)) + \supp(\pi_2(B_1)) = \langle e \rangle $; note that $\pi_2(B_i)$ is squarefree and thus $|\supp (\pi_2(B_i))|= |B_i|$.  Let $g_i \mid B_i$ such that $\pi_2(g_1) + \pi_2( g_2 ) = \pi_2(\sigma(A)) $. Then  $\pi_2( \sigma(  A(g_1g_2)^{-1}) ) = 0$ and $\pi_1( \sigma(  A(g_1g_2)^{-1}) ) $ also equals $0$ as $(\pi_1(g_1),\pi_1(g_1)) $ equals $(0,a)$ or $(b,c)$.  Thus we have a zero-sum subsequence of $A$ of length $2n$.
\end{proof}

Before proceeding to prove the main results, we consider a special case.

\begin{lemma}
\label{lem_c24}
$\gw(C_2 \oplus C_4) = 5$.
\end{lemma}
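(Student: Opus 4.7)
The lower bound $\gw(C_2 \oplus C_4) \geq 5$ is the $n=2$ instance of \eqref{eq_w}, so I focus on the upper bound. Let $A$ be a squarefree sequence of length $5$ over $G := C_2 \oplus C_4 = \langle e_1 \rangle \oplus \langle e_2 \rangle$; the goal is to produce a length-$4$ subsequence $S \mid A$ with $0 \in \sigma_{\pm}(S)$.

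The starting point is that $2G = \langle 2e_2 \rangle = \{0, 2e_2\}$ has only two elements. By Lemma \ref{lem_weightsumset}, $0 \in \sigma_{\pm}(S)$ forces $\sigma(S) \in 2\cdot \Sigma^0(S) \subseteq 2G$, and I would verify that this necessary condition is in fact sufficient when $|S|=4$: the case $\sigma(S) = 0$ is immediate with the empty subsequence $T = 1$, while if $\sigma(S) = 2e_2$ then some element of $\supp(S)$ must have odd second coordinate (otherwise $\supp(S) = \{(0,0),(0,2),(1,0),(1,2)\}$, which sums to $0$), and that element serves as a singleton $T$ with $2\sigma(T) = 2e_2$. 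So the task reduces to finding a length-$4$ subsequence $S \mid A$ with $\sigma(S) \in 2G$.

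Set $H = \langle e_2 \rangle$ and decompose $A = A_0 A_1$ according to $G = H \sqcup (e_1+H)$. Since $2G \subseteq H$, any valid $S$ must use an even number of elements of $A_1$. By pigeonhole $\max(|A_0|,|A_1|) \geq 3$. If some $|A_i| = 4$, then $A_i$ is an entire coset, and one checks directly that $\sigma(A_i) = 2e_2 \in 2G$. The remaining case $\{|A_0|,|A_1|\} = \{2,3\}$ is the crux: $S$ is forced to contain the full size-$2$ part together with $2$ of the $3$ elements of the size-$3$ part, and the three resulting candidate sums are obtained by subtracting, from a common constant in $H$, the three distinct elements of the size-$3$ part, yielding three distinct values in $H \cong C_4$. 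Since the subgroup $2H = 2G$ has index $2$ in $H$, three distinct elements of $H$ cannot all lie outside $2H$, so at least one choice of $S$ satisfies $\sigma(S) \in 2G$. This $(3,2)$-case (and the symmetric $(2,3)$-case) is the only step requiring combinatorial reasoning beyond direct verification; together with the lower bound, it yields $\gw(C_2\oplus C_4) = 5$.
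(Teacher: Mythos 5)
Your proof is correct, and the upper-bound argument takes a genuinely different route from the paper's. The paper also splits $A=A_0A_1$ along the index-two subgroup $\langle e_2\rangle$, but it handles the case $|A_x|\ge 4$ by invoking Theorem \ref{thm_cyclic_pm} (i.e.\ $\gw(C_4)=4$) applied to $\pi_2(A_x)$, and it handles the $(3,2)$-split by a sumset computation: $\sigma_{\pm}(A_y)$ contains a nonzero element of $\langle e_2\rangle$ while $\Sigma_{\pm,2}(A_x)\supseteq \langle e_2\rangle\setminus\{0\}$, whence $0\in\sigma_{\pm}(A_y)+\Sigma_{\pm,2}(A_x)$. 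You instead first establish the clean equivalence that for a squarefree $S$ of length $4$ one has $0\in\sigma_{\pm}(S)$ if and only if $\sigma(S)\in 2G$ (via Lemma \ref{lem_weightsumset} together with the observation that the four elements of even second coordinate sum to $0$), which converts the weighted problem into an unweighted condition on $\sigma(S)$; the $|A_i|=4$ case then becomes a direct coset-sum computation, and the $(3,2)$ case a pigeonhole among three distinct candidate sums in $H\cong C_4$ against the two-element complement of $2H$. Your route is more self-contained (it does not use the cyclic-group theorem), and the reduction to $\sigma(S)\in 2G$ is a nice structural simplification; the paper's version is shorter where it can simply cite $\gw(C_4)=4$. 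One cosmetic slip: in the sub-case $|A_1|=3$ the ``common constant'' $\sigma(A)$ lies in $e_1+H$ rather than in $H$, but the three candidate sums $\sigma(A)-q_k$ still all lie in $H$, so the pigeonhole argument is unaffected.
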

\begin{proof}
We have $\gw(C_2 \oplus C_{2n}) \ge 5$ by \eqref{eq_w}. We establish that $5$ is also an upper bound.
Let $C_2 \oplus C_{4} = \langle e_1 \rangle \oplus \langle e_2 \rangle $ with $\ord (e_1)= 2$ and $\ord (e_2)= 4$; let $\pi_1$ and $\pi_2$ denote the projections on $\langle e_1 \rangle$ and $\langle e_2 \rangle$, respectively.

Let $A$ be a square-free sequence over  $C_2 \oplus C_{4}$ of length $5$ and write $A=A_0A_1$ where $\pi_1(g)$ is $0$ and $e_1$ for $g \mid A_0$ and $g \mid A_1$, respectively.   Let $\{x,y\} = \{0,1\}$ such that $|A_x| \ge |A_y|$. If we have $|A_x|\ge 4$, then by Theorem \ref{thm_cyclic_pm} we get that $\pi_2(A_x)$, which is a squarefree sequence over $C_4$, has a plus-minus weighted zero-subsum of length $4$. Since for each plus-minus weighted subsum of length $4$ of $A_x$ the value under $\pi_1$ is $0$ as it is always $4x=0$ the choice of sign being irrelevant for an element of order $2$, the  plus-minus weighted zero-subsum of lengths $4$ of $\pi_2(A_x)$, yields in fact a plus-minus weighted zero-subsum of length $4$ of $A_x$.

Thus, it remains to consider $|A_x|= 3$ and $|A_y|=2$. We note that $\sigma_{\pm} (A_y)$ contains a non-zero element of $ \langle e_2 \rangle $. Yet $\Sigma_{\pm,2}(A_x)$ contains $\langle e_2 \rangle \setminus \{0\}$, which can for example be seen by considering the four possible cases. Thus, we get a plus-minus weighted zero-subsum of length $4$.
\end{proof}

Now, we give the proofs of the two main results  Theorem \ref{thm_c22n_w} and Theorem \ref{thm_c22n_c}.

\begin{proof}[Proof of Theorem \ref{thm_c22n_w}]
For $n=1$ we have $\gw(C_2 \oplus C_2)=5$ by Corollary \ref{cor_G+1_weighted}, and for $n=2$, we have $\gw(C_2 \oplus C_{2n}) = 5$ by Lemma \ref{lem_c24}.
For $n\ge 3$, we have $\gw(C_2 \oplus C_{2n}) \ge 2n +2$ by Lemma \ref{lem_lb_w}.
It remains to show  $\gw(C_2 \oplus C_{2n}) \le 2n +2$ for $n \ge 3$. For even $n$ this follows by Proposition \ref{prop_22n_pair} as $\gw(C_2 \oplus C_{2n}) \le \g(C_2 \oplus C_{2n})$. Thus, suppose $n \ge 3$ is odd.

Since $n$ is odd, we have $C_2 \oplus C_{2n}= \langle f_1\rangle  \oplus \langle f_2 \rangle \oplus \langle e \rangle $ with $\ord (f_i) = 2 $ and $ \ord (e)= n$.
Let $\pi_1$ denote the projection to $\langle f_1\rangle  \oplus \langle f_2 \rangle$ and let $\pi_2$ denote the one to $ \langle e \rangle$.
 Let $A$ be a square-free sequence over  $C_2 \oplus C_{2n}$ of length $2n+2$. Denote $\langle f_1\rangle  \oplus \langle f_2 \rangle = \{0,a,b,c\}$; note that $b+c = a$. Now we write $A=A_0A_a A_b A_c$ where $\pi_1(g)=x$  for $g \mid A_x$.
We distinguish the two cases $\pi_1(\sigma(A))\neq 0$ and $\pi_1(\sigma(A))= 0$.

Assume  $\pi_1(\sigma(A)) \neq 0$. The existence of a zero-sum subsequence, and thus in particular a plus-minus weighted zero-subsum, follows directly by Proposition \ref{prop_nonzero}.

Assume  $\pi_1(\sigma(A)) = 0$. Let $x \in \{0,a,b,c\}$ such that $|A_x|$ is maximal. Then $|A_x| \ge (2n + 2) / 4 = (n+1)/2$. Suppose there exist $gh|A_x$ such that $\pi_2 (g) + \pi_2 (h)=  \pi_2 (\sigma(A))$. Then, as $\pi_1 (g) = \pi_1 (h)$, we get that $A(gh)^{-1}$ is a zero-sum sequence of length $2n$, and we are done.

Thus assume such $gh$ does not exist. By Lemma \ref{lem_fullgroup} this implies that $|A_x|=(n+1)/2$, and thus $|A_y|=(n+1)/2$ for each  $y \in \{0,a,b,c\}$.
Let now $gh|A_0$ arbitrary. We note that $\pi_1(\sigma_{\pm} (A (gh)^{-1})) = \{0\}$. To complete the proof it thus suffices to show that $0 \in \pi_2(\sigma_{\pm} (A (gh)^{-1}))$. In fact, we now show  $\pi_2(\sigma_{\pm} (A (gh)^{-1})) = \langle f \rangle$. We observe
\(|\sigma_{\pm} (\pi_2 ( A(gh)^{-1} ))  |  \ge | \sigma_{\pm} (\pi_2 ( A_aA_b))  | = | \sigma_{\pm} (\pi_2 ( A_a)) + \sigma_{\pm} (\pi_2 ( A_b))  |.
\)
 We have  $|\sigma_{\pm} (\pi_2 ( A_a))| = |\Sigma^0 (\pi_2 ( A_a))| \ge  |\pi_2 ( A_a)| = |A_a| = (n+1)/2 $, where we used Lemma \ref{lem_weightsumset} for the first equality; and the same for $b$ instead of $a$. Thus, by Lemma \ref{lem_fullgroup} $ \sigma_{\pm} (\pi_2 ( A_a)) + \sigma_{\pm} (\pi_2 ( A_b))   = \langle f \rangle$ and  thus  $| \sigma_{\pm} (\pi_2 ( A(gh)^{-1} )) | =  n $ establishing the claim.
\end{proof}

\begin{proof}[Proof of Theorem \ref{thm_c22n_c}]
First, we observe that the result holds for $n=1$, since the group is an elementary $2$-group (see the Introduction).  Thus, assume $n \neq 1$.
For even $n$, we have $\g(C_2  \oplus C_{2n}) \ge 2n + 2$ by \eqref{eq_c} and $\g(C_2  \oplus C_{2n}) \le 2n + 2$ by Proposition \ref{prop_22n_pair}.

Now, assume $n$ is odd. We have $\g(C_2  \oplus C_{2n}) \ge 2n + 3$ by Lemma \ref{lem_lb_c}. It remains to show $\g(C_2  \oplus C_{2n}) \le 2n + 3$.
Since $n$ is odd, we have $C_2 \oplus C_{2n}= \langle f_1\rangle  \oplus \langle f_2 \rangle \oplus \langle e \rangle $ with $\ord (f_i) = 2 $ and $ \ord (e)= n$.
Let $\pi_1$ denote the projection to $\langle f_1\rangle  \oplus \langle f_2 \rangle$ and let $\pi_2$ denote the one to $ \langle e \rangle$.
Let $A$ be a square-free sequence over  $C_2 \oplus C_{2n}$ of length $2n+3$.
We have to show that it has a zero-sum subsequence of length $2n$. Let  $g \mid A$  such that $  \pi_1( g ) \neq \pi_1(\sigma( A ))$; note such an element exists since at most $n$ of the elements of $A$, and in fact $C_2 \oplus C_{2n}$, can have the same value under $\pi_1$.

Let   $A'= Ag^{-1}$; this is a subsequence of $A$ of length $2n+2$. We have  $\pi_1(\sigma( A')) = \pi_1(\sigma( A )) - \pi_1( g ) \neq 0$. Thus, by Proposition \ref{prop_nonzero} we know that $A'$ has a zero-sum subsequence of length $2n$, and thus also $A$ has this zero-sum subsequence, completing the argument.
\end{proof}

\end{document}